\documentclass[12pt]{amsart}
\usepackage{amsmath,amsthm,amssymb,bbm,dsfont}
\usepackage{graphicx}
\usepackage{subfig}
\usepackage{cite,float}

\textwidth=430pt \evensidemargin=9pt \oddsidemargin=9pt
\marginparsep=8pt \marginparpush=8.1pt \textheight=640pt
\topmargin=-20pt

\numberwithin{equation}{section}
\newtheorem{theorem}{Theorem}[section]
\newtheorem{lemma}[theorem]{Lemma}
\newtheorem{result}[theorem]{Result}
\newtheorem{corollary}[theorem]{Corollary}
\newtheorem{proposition}[theorem]{Proposition}
\theoremstyle{remark}
\newtheorem{remark}[theorem]{Remark}

\newtheorem{definition}[theorem]{Definition}

\makeatletter
\@namedef{subjclassname@2010}{%
  \textup{2010} Mathematics Subject Classification}
\makeatother
\begin{document}

\title{$d$-balanced squeezing function }

\author[N. Gupta]{Naveen Gupta}
\address{Department of Mathematics, University of Delhi,
Delhi--110 007, India}
\email{ssguptanaveen@gmail.com}

\author[S. Kumar]{Sanjay Kumar Pant}

\address{Department of Mathematics, Deen Dayal Upadhyaya college, University of Delhi,
Delhi--110 078, India}
\email{skpant@ddu.du.ac.in}

\begin{abstract}
We introduce the notion of $d$-balanced squeezing function 
motivated by the concept of generalized squeezing function given by Rong and Yang.
In this work we study some of its properties and its relation with the Fridman invariant. 
 \end{abstract}
\keywords{squeezing function; extremal map; holomorphic homogeneous regular domain; 
	quasi balanced domain.}
\subjclass[2010]{32F45, 32H02}
\maketitle

\section{Introduction}
The aim of this article is to extend the notion of generalized squeezing function
for balanced domains 
introduced by Rong and Yang in \cite{gen}. We have extended it to
$d$-balanced domains. Before giving our definition we give, in chronological order,
the notions preceding it.

$\mathbb{B}^n$ denotes unit ball in $\mathbb{C}^n$ and $D\subseteq \mathbb{C}^n$  is used for
bounded domain. The set  of all injective holomorphic 
maps from $D$ to  a domain $\Omega\subseteq \mathbb{C}^n$ is denoted by 
$\mathcal{O}_u(D,\Omega)$.

For $z\in{D}$ 
the squeezing function $S_{D}$ on $D$ is defined as
$$S_{D}(z):=\sup_f\{r:B^n(0,r)\subseteq f(D), f\in{\mathcal{O}_u(D,\mathbb{B}^n)}\},$$ 
where $B^n(0,r)$ denotes ball of radius $r$ centered at the origin.
\smallskip

In our recent article \cite{npoly}, we introduced a definition of squeezing function $T_D$ 
corresponding to polydisk 
$\mathbb{D}^n$ in $\mathbb{C}^n$: 
$$T_{D}(z):=\sup_f\{r:\mathbb{D}^n(0,r)\subseteq f(D), f\in{\mathcal{O}_u(D,\mathbb{D}^n)}\},$$ 
where $\mathbb{D}^n(0,r)$ denotes polydisk of radius $r$, centered at the origin.

In \cite{gen}, Rong and Yang introduced the concept of generalized squeezing function
$S^{\Omega}_D$ for bounded domains $D,\Omega\subseteq \mathbb{C}^n$, where $\Omega$ is a
balanced domain.

Let us quickly recall the notion of balanced domains and Minkowski function. We say that a 
domain $\Omega\subseteq \mathbb{C}^n $ is balanced if  $\lambda z\in{\Omega} $
for each $z\in{\Omega}$ and $|\lambda|\leq 1$. Let $\Omega \subseteq \mathbb{C}^n$ 
be a bounded, balanced, convex
domain. The Minkowski function denoted by $h_{\Omega}$  on 
$\mathbb{C}^n$ is defined as 
$$h_{\Omega}(z):=\inf \{t>0:z/t\in{\Omega}\}.$$
For $0<r\leq 1$, let $\Omega(r):=\{z\in{\mathbb{C}^n:h_{\Omega}(z)<r\}}$. 
It can be seen easily that $\Omega(1)=\Omega$. For 
a bounded domain $D\subseteq \mathbb{C}^n$ and a bounded, balanced, convex domain 
$\Omega\subseteq \mathbb{C}^n$, Rong and Yang introduced the notion of generalized squeezing
function $S_D^{\Omega}$ on $D$ as
\begin{equation*}\label{eqn:gensq}
	S^{\Omega}_D(z):=\sup \{r:\Omega(r)\subseteq f(D), f\in{\mathcal{O}_u(D,\Omega)}, f(z)=0\}.
\end{equation*} 

It follows from the definition that $S^{\Omega}_D$
is biholomorphic invariant and that its values lie in semi-open interval $(0,1].$ 
As for squeezing function in general, a bounded domain $D$ is holomorphic 
homogeneous regular if its
generalized squeezing function $S^{\Omega}_D$ has a positive lower bound.

Motivated by the notion of balanced domains, Nikolov in his work \cite{nikolov-d}
gave the definition of $d$-balanced(quasi balanced) domains: Let 
$d=(d_1,d_2,\ldots,d_n)\in{\mathbb{Z}_n^{+}},n\geq 2$, a domain $\Omega\subseteq \mathbb{C}^n$
is said to be $d$-balanced if for each $z=(z_1,z_2,\ldots, z_n)\in \Omega$
and $\lambda \in{\overline{\mathbb{D}}}$, $\left(\lambda^{d_1}z_1,\lambda^{d_2}z_2,\ldots, 
\lambda^{d_n}z_n\right) \in \Omega,$ where $\mathbb{D}$ denotes unit disk in $\mathbb{C}$.
Note that balanced domains are simply $(1,1,\ldots, 1)$-balanced.

For a $d$-balanced domain $\Omega$, there is a natural analogue of Minkowski function called
the $d$-Minkowski function on $\mathbb{C}^n$,
denoted by $h_{d,\Omega}$ and is defined as 
$$h_{d,\Omega}(z):=\inf \{t>0:\left(\frac{z_1}{t^{d_1}},\frac{z_2}{t^{d_2}},\ldots,
\frac{z_n}{t^{d_n}}\right)\in{\Omega}\}.$$
For each $0<r\leq 1$, we fix 
$\Omega^d(r):=\{z\in{\mathbb{C}^n:h_{d,\Omega}(z)<r\}}$.
It is easy to observe that for a bounded $d$-balanced domain $\Omega$, $\Omega^d(1)=\Omega\,$(Remark 
\ref{rem:basicminko}). Finally we are in a position to introduce the definition
of  our  $d$-balanced squeezing function.

\begin{definition} 
	For a bounded domain $D\subseteq \mathbb{C}^n$, and a bounded, convex, 
	$d=(d_1,d_2,\ldots, d_n)$-balanced domain $\Omega$, 
	\emph{ $d$-balanced squeezing function corresponding to $\Omega$} of the domain $D$,  
	denoted by $S_{d,D}^\Omega$(also called the\emph{ $d$-balanced squeezing function} for brevity)
	is given by:
	\begin{equation*}\label{eqn:gensq}
		S_{d,D}^\Omega(z):=\sup \{r:\Omega^d(r)\subseteq f(D), f\in{\mathcal{O}_u(D,\Omega)}, f(z)=0\}.
	\end{equation*}
\end{definition}
For notational convenience, we denote the squeezing function  
$S_{d,D}^\Omega$ by $S^d$ unless otherwise stated.
It is easy to see that $S^d$ is biholomorphic invariant and its values lie in the semi-open
interval $(0,1].$ As with generalized squeezing function here also $D$ is holomorphic homogeneous d-regular if its
squeezing function $S^d$ has a positive lower bound.

Recall that a domain is said to be homogeneous if its group of automorphisms 
acts transitively on it. Let  $D\subseteq \mathbb{C}^n$ be bounded and 
$\Omega\subseteq \mathbb{C}^n$
be bounded, homogeneous. Fridman invariant on $D$, denoted by $g_{D}^d$,
is defined as $$g_{D}^d(a):=\inf\{1/r :B_{D}^d(a,r)\subseteq f(\Omega),
f\in{\mathcal{O}_u(\Omega,D)}\},$$
where $d_{D}$ is the Carathéodory(or Kobayashi) pseudodistance  $c_{D}(\mbox{or} \ k_{D})$ on $D$ and 
$B_{D}^d(a,r)$ is the $c_{D}(\mbox{or}\ k_D)$  ball centered at $a$ of radius $r>0.$ 
For comparison purpose, we consider $h_{D}^d$, defined as
$$h_{D}^d(a):=\sup\{\tanh r :B_{D}^d(a,r)\subseteq f(\Omega), 
f\in{\mathcal{O}_u(\Omega,D)}\}.$$

This work is devoted to general properties of $S_{d,D}^{\Omega}$ and $h_D^d$, and some connections between them.
The question arises: given all the information that the(standard) squeezing function already provides, what 
could be the utitlity of $S_{d,D}^{\Omega}$?  We point to a host of results where, given $D$ as above and a point $p\in
\partial D$ around which $\partial D$ is smooth, if $\lim_{z\to p}S_D(z)=1$ then, based on additional geometric information
about $\partial D$ around $p$, it is inferred that $\partial D$ is strongly Levi-pseudoconvex at $p$: see \cite{h-extendible} and 
the references therin. Now, $S_{d,D}^{\Omega}$ would play an analogous role in understanding alternative Levi geometries. 
By ``understanding alternative Levi geometries" we mean the following type of problem: with $\Omega$ fixed and 
$d\neq (1,1,\ldots, 1)$ and with some additional geometric information about $\partial D$ around $p$, studying what 
the inertia of the Levi-form of $\partial D$ must be in terms of the non-$1$ entries  of $d$ if 
$\lim_{z\to \partial D} S_{d,D}^{\Omega}(z)=1.$(Note how $d=(d_1,d_2,\ldots, d_k)$ is vital to such problems.)
Some of the properties of $S_{d,D}^{\Omega}$ exposed below would be needed
for the latter class of problems. Now, what sort of geometric information about $\partial D$ around $p$ would be needed, 
one may ask. See \cite[~Section 8]{bharali2021} for a range of natural conditions and for a general discussion on what ingredients the solutions of the
latter problems need.

{\it Layout of the paper}: In the second section we show that product of holomorphic 
homogeneous regular domain is
holomorphic homogeneous regular. An inequality related to $d$-Minkowski function
is the main feature of section three.  The fourth and final sections
contain the usual results about squeezing functions for  $d$-balanced domains and
Fridman invariant. We would like to point out specifically the results concerning Fridman
invariant and the Lemma \ref{lem:bound} achieved by tweaking a result of Bharali [1]. This
lemma was mainstay for our continuity result 4.9 in the fourth section. We would
also like to mention the continuity result regarding the construction of a particular
function $g$.

\section[]{On generalized squeezing function}
We begin with the following observation:
\begin{lemma}\label{lem:hom}
	For $0<r<1, \ \Omega(r)=r\, \Omega(1).$
\end{lemma}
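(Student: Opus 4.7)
The plan is to leverage the two defining properties of the Minkowski functional $h_{\Omega}$ that are already recorded in the excerpt: it is a $\mathbb{C}$-norm (so in particular $h_{\Omega}(\lambda z)=|\lambda|\,h_{\Omega}(z)$ for every $\lambda\in\mathbb{C}$), and $\Omega(1)=\Omega$. With those two facts in hand, the identity $\Omega(r)=r\,\Omega(1)$ reduces to a one-line scaling computation in each direction.

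For the inclusion $\Omega(r)\subseteq r\,\Omega(1)$, I would pick $z\in\Omega(r)$, so that $h_{\Omega}(z)<r$, and consider $w:=z/r$. By the $\mathbb{C}$-homogeneity of $h_{\Omega}$, one has $h_{\Omega}(w)=h_{\Omega}(z)/r<1$, hence $w\in\Omega(1)$, and therefore $z=rw\in r\,\Omega(1)$. The reverse inclusion $r\,\Omega(1)\subseteq \Omega(r)$ is analogous: for $z=rw$ with $w\in\Omega(1)$ one has $h_{\Omega}(w)<1$, and again by homogeneity $h_{\Omega}(z)=r\,h_{\Omega}(w)<r$, which gives $z\in\Omega(r)$.

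There is essentially no obstacle here; both directions are genuine one-liners. The only point that would deserve a brief mention is that the identification $\Omega=\{z:h_{\Omega}(z)<1\}$ underlying $\Omega(1)=\Omega$ relies on $\Omega$ being open and balanced, but this is already stated as standard in the excerpt. Consequently the proof I envision will be at most a few lines, reading off from the degree-one homogeneity of $h_{\Omega}$.
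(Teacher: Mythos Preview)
Your proposal is correct and essentially identical to the paper's own proof: both directions are established by applying the homogeneity of $h_{\Omega}$ to $z/r$ (respectively to $ra$) exactly as you describe. The paper cites \cite[~Remark 2.2.1(a)]{pflug} for the homogeneity, but otherwise the argument is line-for-line the same.
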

\begin{proof}
	Let $z\in{\Omega(r)},$ thus $h_{\Omega}(z)<r$. This, using homogeneity of $h_{\Omega}$
	\cite[~Remark 2.2.1(a)]{pflug}, gives us $h_{\Omega}\left(\frac 1 r z\right)<1.$	Thus
	$\frac 1 r z\in{\Omega(1)}$. Therefore $z=r\left( z/r\right)\in{r\, \Omega(1)}$.
	
	Let $z\in r\, {\Omega(1)}$, then $z=ra\,$ for some $a\in {\Omega(1)}.$ Now using 
	homogeneity of 
	$h_{\Omega}$, we get $h_{\Omega}(r a)<r$, which clearly shows that $ra=z\in{\Omega(r)}.$
\end{proof}
\begin{remark}\label{rem:hom} Let $\Omega_1\subseteq \mathbb{C}^{n_1}$ and $\Omega_2
	\subseteq \mathbb{C}^{n_2}$ be balanced domains
	and let $\Omega=\Omega_1\times \Omega_2$. We know that product of balanced domains is balanced, then
	by the above lemma one can see that for any $r>0$, 
	$\Omega(r)=\Omega_1(r)\times \Omega_2(r).$ 
\end{remark}
Remark \ref{rem:hom}  can also be deduced from 
\cite[~Remark 2.2.1]{pflug}.
On the lines of our result on product domain\cite{npoly}, we deduce a similar result
about generalized squeezing function on product domain.

\begin{proposition}\label{prp:lowerbound} If we consider
\begin{itemize}
	\item $\Omega_i\subseteq \mathbb{C}^{n_i},\ 
	(i=1,2,\ldots,k)$  bounded, convex and balanced domains.
	\item $\Omega=\Omega_1\times \Omega_2\times \ldots \times \Omega_k\subseteq \mathbb{C}^n,$ 
	where $n=n_1+n_2+\ldots +n_k$.
	\item $D_i\subseteq \mathbb{C}^{n_i},\ (i=1,2,\ldots,k)$ bounded domains and
	$D=D_1\times D_2\times \ldots \times D_k$.
\end{itemize}
Then  for $a=(a_1,a_2,\ldots,a_k)\in{D}$, 
\begin{equation}\label{eqn:lowerbound}
	S_{D}^{\Omega}(a)\geq\min_{1\leq i\leq k}  S_{D_i}^{\Omega_i}(a_i). 
\end{equation}

\end{proposition}
\begin{proof}
By \cite[~Theorem 3.4]{gen}, for each $a_i\in{D_i}$, there exists an extremal map. That is
for each $i,\ 1\leq i\leq k$,
there exist injective holomorphic map
$f_i:D_i\to \Omega_i$ with $f_i(a_i)=0$ such that 
\begin{equation}\label{eqn:product}
	\Omega_i\left(S_{D_i}^{\Omega_i}(a_i)\right)\subseteq f_i
	(\Omega_i)\subseteq \Omega,\ i=1,2,\ldots, k.
\end{equation}
Consider the map $f:D \to \Omega$  defined as
$$f(z_1,z_2,\ldots, z_k):=\left(f_1(z_1),f_2(z_2),\ldots,f_k(z_k)\right).$$
Clearly, $f$ is an injective holomorphic map with $f(a)=0$. Let $r=\min_{1\leq i\leq k}  
S_{D_i}^{\Omega_i}(a_i)$. It follows from Remark \ref{rem:hom} that 
$\Omega(r)=\Omega_1(r)\times \Omega_2(r)\times \ldots \times \Omega_k(r)$. 
Let $w=(w_1,w_2,\ldots, w_k) \in \Omega(r)= \Omega_{1}(r)\times \Omega_{2}(r)\times \ldots
\times \Omega_{k}(r). $ By
Equation \ref{eqn:product}, there exists $b_i\in{D_i},$ such that $f_i(b_i)=w_i,\ i=1,2,
\ldots, k,$ since
$\Omega_{i}(0,r)\subseteq \Omega_{i}(S_{D_i}^{\Omega_i}(a_i)) $ for each $i$.
Thus $w=f(b_1,b_2,\ldots, b_k)$ and as $w$ was arbitrarily chosen, we conclude 
$\Omega(r)\subseteq f(D)$. 
Thus it follows from the definition that $S_{D}^{\Omega}(a)\geq\min_{1\leq i\leq k} 
S_{D_i}^{\Omega_i}(a_i).$
\end{proof}
A trivial consequence of the  above inequality (\ref{eqn:lowerbound}) is
that the product of holomorphic homogeneous regular
domains is holomorphic homogeneous regular.

\section{Few results on $d$-minkowski function}

\begin{remark}\label{rem:basicminko}
For a $d$-balanced domain $\Omega\subseteq \mathbb{C}^n$, the following holds:
( see\cite[~Remark 2.2.14]{pflug})
\begin{enumerate}
	\item $\Omega =\{z\in{\mathbb{C}^n}:h_{d,\Omega}(z)<1\}$.
	\item $h_{d,\Omega}\left(\lambda^{d_1}z_1,\lambda^{d_2}z_2,\ldots, \lambda^{d_n}
	z_n\right)=|\lambda |h_{d,\Omega}(z)$ for each $z=(z_1,z_2,\ldots, z_n)\in{\mathbb{C}^n}$
	and $\lambda\in{\mathbb{C}}.$
	\item $h_{d,\Omega}$ is upper semicontinuous.
\end{enumerate}
\end{remark}

Minkowski function $h_\Omega$ for a bounded, balanced, convex domain $\Omega\subseteq \mathbb{C}^n$ is 
a $\mathbb{C}$-norm \cite[~Lemma 3.3]{gen}, in particular it satisfies triangle inequality. 
In this direction, for a $d$-balanced,
convex domain $\Omega\subseteq \mathbb{C}^n$,
we have the following proposition.

\begin{proposition}\label{prop:triangle}
Let $\Omega\subseteq \mathbb{C}^n$ be a $d$-balanced, convex domain. Then for 
$z,w\in\mathbb{C}^n,\ \alpha\in{[0,1]} $, $$h_{d,\Omega}(\alpha z+(1-\alpha )w)
\leq h_{d,\Omega}(z)+h_{d,\Omega}(w). $$ 
\end{proposition}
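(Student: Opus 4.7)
The plan is to show, more strongly, that each sublevel set $\Omega^d(r)$ is convex, i.e.\ $h_{d,\Omega}$ is quasi-convex. Since $h_{d,\Omega}\geq 0$, the inequality $\max(a,b)\leq a+b$ then gives the claimed estimate for free. The underlying idea is that $\Omega^d(r)$ is just the preimage of the convex set $\Omega$ under the diagonal linear map $z\mapsto (z_1/r^{d_1},\dots,z_n/r^{d_n})$, once one observes that the infimum in the definition of $h_{d,\Omega}$ can be replaced by a simple membership condition.

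First I would establish the pointwise description
\[
\Omega^d(r)=\{z\in\mathbb{C}^n : (z_1/r^{d_1},\dots,z_n/r^{d_n})\in\Omega\}\qquad (r>0).
\]
For the forward inclusion: if $h_{d,\Omega}(z)<r$ there exists $t<r$ with $(z_i/t^{d_i})_i\in\Omega$; setting $\lambda=t/r\in(0,1)$ and applying $d$-balancedness (Remark~\ref{rem:basicminko}(2) or the very definition) to that point yields
\[
(z_i/r^{d_i})_i=(\lambda^{d_i}(z_i/t^{d_i}))_i\in\Omega.
\]
For the reverse inclusion I would use openness of $\Omega$: if $(z_i/r^{d_i})_i\in\Omega$, then for $r'<r$ sufficiently close to $r$ the perturbed point $(z_i/r'^{d_i})_i$ still lies in $\Omega$, so $h_{d,\Omega}(z)\leq r'<r$. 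With this description in hand, convexity of $\Omega^d(r)$ is immediate: for $z,w\in\Omega^d(r)$ and $\alpha\in[0,1]$,
\[
\alpha(z_i/r^{d_i})+(1-\alpha)(w_i/r^{d_i})=(\alpha z_i+(1-\alpha)w_i)/r^{d_i}\in\Omega
\]
by convexity of $\Omega$, giving $\alpha z+(1-\alpha)w\in\Omega^d(r)$.

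Finally I would take any $r>\max(h_{d,\Omega}(z),h_{d,\Omega}(w))$, conclude $\alpha z+(1-\alpha)w\in\Omega^d(r)$, and let $r$ decrease to the maximum to obtain
\[
h_{d,\Omega}(\alpha z+(1-\alpha)w)\leq\max(h_{d,\Omega}(z),h_{d,\Omega}(w))\leq h_{d,\Omega}(z)+h_{d,\Omega}(w).
\]
The only genuinely non-routine step is the first one; without invoking $d$-balancedness to pass from the scale $t$ to the scale $r$, one cannot convert $\Omega^d(r)$ (defined via an infimum) into a linear preimage of $\Omega$, and the whole approach collapses. Once that identification is made, everything else is a one-line consequence of convexity of $\Omega$.
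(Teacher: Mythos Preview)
Your proof is correct and, at its core, uses the same two ingredients as the paper: $d$-balancedness to pass from one scale $t$ to a larger scale $r$ (or $c$), and convexity of $\Omega$ to take the convex combination at that common scale. The paper does this pointwise with an $\epsilon$-argument, setting $a=h_{d,\Omega}(z)+\epsilon/2$, $b=h_{d,\Omega}(w)+\epsilon/2$, $c=\max(a,b)$, and then bounding $c\leq a+b$; you instead package the same computation as the structural statement ``$\Omega^d(r)$ is convex,'' which is cleaner and yields the sharper quasi-convexity bound $h_{d,\Omega}(\alpha z+(1-\alpha)w)\leq\max(h_{d,\Omega}(z),h_{d,\Omega}(w))$ before passing to the sum. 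One minor technical difference: you invoke openness of $\Omega$ to get the reverse inclusion in your identification of $\Omega^d(r)$, whereas the paper sidesteps this by building the $\epsilon$-slack in from the outset; both are equally valid.
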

\begin{proof}
Let $\epsilon >0$ be arbitrary and $a=h_{d,\Omega}(z)+\epsilon/2,\ b=h_{d,\Omega}(w)+
\epsilon/2$. Then there exists $t,s>0$ with $t<a, \ s<b$ such that $\left(\frac{z_1}{t^{d_1}},
\frac{z_2}{t^{d_2}},\ldots, \frac{z_n}{t^{d_n}}\right)\in{\Omega}$ and 
$\left(\frac{w_1}{s^{d_1}},\frac{w_2}{s^{d_2}},\ldots, \frac{w_n}{s^{d_n}}\right)\in{\Omega}$.
Since $t/a<1,\ s/b<1$ and $\Omega$ is $d$-balanced, we get that $\left(\frac{z_1}{a^{d_1}},
\frac{z_2}{a^{d_2}},\ldots, \frac{z_n}{a^{d_n}}\right)\in{\Omega}$ and 
$\left(\frac{w_1}{b^{d_1}},\frac{w_2}{b^{d_2}},\ldots, \frac{w_n}{b^{d_n}}\right)
\in{\Omega}$. Let $c=\max(a,b)$, then we get $\left(\frac{z_1}{c^{d_1}}, \frac{z_2}{c^{d_2}},
\ldots, \frac{z_n}{c^{d_n}}\right)\in{\Omega}$ and $\left(\frac{w_1}{c^{d_1}},
\frac{w_2}{c^{d_2}},\ldots, \frac{w_n}{c^{d_n}}\right)\in{\Omega}$. Using convexity, 
we get $\left(\frac{\alpha z_1+(1-\alpha )w_1}{c^{d_1}}, \frac{\alpha z_2+(1-\alpha )
	w_2}{c^{d_2}},\ldots, \frac{\alpha z_n+(1-\alpha )w_n}{c^{d_n}}\right)\in{\Omega}=
\Omega^d(1).$ Therefore we get $$h_{d,\Omega}\left(\frac{\alpha z_1+(1-\alpha )w_1}{c^{d_1}},
\frac{\alpha z_2+(1-\alpha )w_2}{c^{d_2}},\ldots,
\frac{\alpha z_n+(1-\alpha )w_n}{c^{d_n}}\right)<1,$$  which upon using Remark 
\ref{rem:basicminko}(2) gives us $h_{d,\Omega}(\alpha z+(1-\alpha )w)<c$. Noting that
$c=\frac 1 2 \left(a+b+|a-b|\right)$, we get $h_{d,\Omega}(\alpha z+(1-\alpha )w)
<h_{d,\Omega}(z)+h_{d,\Omega}(w)+\epsilon.$ Since $\epsilon $ was arbitrary, we conclude
that $h_{d,\Omega}(\alpha z+(1-\alpha )w)\leq h_{d,\Omega}(z)+h_{d,\Omega}(w).$
\end{proof}


\section{$d$-balanced squeezing function}

We first recall few results that we will be using in this section.
Note that \cite[~Theorem 1]{lempert-classic}, \cite[~Theorem 1.3]{lempert}
and the Remark 1.6 therein yields the following.
\begin{result}\label{res:lempert}
For a convex domain $\Omega\subseteq \mathbb{C}^n,\ c_{\Omega}
=k_{\Omega}=\tilde{k}_{\Omega},$
where $\tilde{k}_{\Omega}$ denotes the Lempert function on $\Omega$.
\end{result}
Combining Result \ref{res:lempert} with \cite[~Theorem 1.6]{bharali}, we get the following.

\begin{result}\label{res:bharali}
For a bounded, convex, $d=(d_1,d_2,\ldots,d_n)$-balanced domain $\Omega\subseteq \mathbb{C}^n$, 
$$\tanh^{-1}h_{d,\Omega}(z)^L\leq c_{\Omega}(0,z)=k_{\Omega}(0,z)\leq \tanh^{-1}h_{d,\Omega}(z),$$
where $L=\max_{1\leq i\leq n}d_i.$
\end{result}

\begin{result}[{\cite[Theorem~2.2]{2012}}]\label{res:injectivity} 
Let $D\subseteq \mathbb{C}^n$ be a bounded domain and $z\in D$. Let $\{f_i\}$
be a sequence of injective holomorphic
maps, $f_i:D\to \mathbb{C}^n$, with $f_i(z)=0$ for all $i.$
Suppose that $f_i\to f,$ uniformly on compact subsets of $D$,
where $f:D\to \mathbb{C}^n$. If there exists a neighborhood
$U$ of $0$ such that $U\subseteq f_i(D)$ for all $i$, then $f$ is 
injective.
\end{result}

\begin{result}[{\cite[Lemma~2.4]{llyod}}]\label{res:lemrouche}
Let $D\subseteq \mathbb{C}^n$ be a bounded domain and $f,\, g:D\to \mathbb{C}^n$
be holomorphic such that 
$$\|f(z)\|< \|g(z)\|\ ,z\in \partial D. $$ Then $f$ and $f+g$ have the same number of zeroes
in $D$, counted according to multiplicities.
\end{result}
For a bounded domain $D$ and a bounded, convex, $d$-balanced domain $\Omega$, an injective
holomorphic map $f:D\to \Omega$
with $f(z)=0$ is said to an be an extremal map at $z\in D$, if 
$\Omega^d(S^{d}(z))\subseteq f(D).$ Before proving the existence
of extremal maps for  squeezing function $S^d$ we prove the following useful lemma.

\begin{lemma}\label{lem:dextremal}
Let $\Omega\subseteq \mathbb{C}^n$ be $d$-balanced domain and $r_k\to r,\ 0<r_k,r<1$ be
such that  for every $k$, $\Omega^d(r_k)\subseteq A$, where $A$ is some subset of
$ \mathbb{C}^n$. Then $\Omega^d(r) \subseteq A.$ 
\end{lemma}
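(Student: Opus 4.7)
The plan is to unwind the definition of $\Omega^d(r)$ and use a simple convergence argument, exploiting the fact that the family $\{\Omega^d(s)\}_{0<s<1}$ is monotone increasing in $s$ (which follows directly from $\Omega^d(s) = \{z : h_{d,\Omega}(z) < s\}$).

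First I would pick an arbitrary $z \in \Omega^d(r)$; by definition this means $h_{d,\Omega}(z) < r$. Set $\delta := r - h_{d,\Omega}(z) > 0$. Since $r_k \to r$, there exists an index $K$ such that $|r_k - r| < \delta$ for every $k \geq K$; in particular $r_k > r - \delta = h_{d,\Omega}(z)$. Consequently $h_{d,\Omega}(z) < r_k$, so $z \in \Omega^d(r_k)$. By the hypothesis $\Omega^d(r_k) \subseteq A$, so $z \in A$. Since $z \in \Omega^d(r)$ was arbitrary, $\Omega^d(r) \subseteq A$, which is the desired conclusion.

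There is essentially no obstacle here; the whole content of the lemma is the observation that $\Omega^d(r) = \bigcup_{s < r}\Omega^d(s)$, together with the fact that $r_k \to r$ forces the sequence $\{r_k\}$ to eventually exceed any fixed value strictly less than $r$. Note also that the statement does not require $r_k < r$: if some $r_k \geq r$ we already have the stronger inclusion $\Omega^d(r) \subseteq \Omega^d(r_k) \subseteq A$ by monotonicity, while if $r_k < r$ eventually, the argument above applies. Neither convexity of $\Omega$ nor Proposition \ref{prop:triangle} is needed for this lemma; only Remark \ref{rem:basicminko}(2) is implicitly used, via the definition of $h_{d,\Omega}$ and the sublevel sets $\Omega^d(\cdot)$.
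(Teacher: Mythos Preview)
Your proof is correct and follows essentially the same approach as the paper: pick $z\in\Omega^d(r)$, use $h_{d,\Omega}(z)<r$ together with $r_k\to r$ to find an index with $r_k>h_{d,\Omega}(z)$, and conclude $z\in\Omega^d(r_k)\subseteq A$. Your version is in fact slightly more streamlined, since you work directly with the value $h_{d,\Omega}(z)$ rather than passing through the infimum definition to extract an auxiliary $t_0$; one small inaccuracy in your closing remarks is that Remark~\ref{rem:basicminko}(2) is not actually needed here---only the definition of $\Omega^d(r)$ as the sublevel set $\{h_{d,\Omega}<r\}$ is used.
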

\begin{proof}
Let $z=(z_1,z_2,\ldots,z_n)\in{\Omega^d(r)}$, that is $h_{d,\Omega}(z)<r$. 
Thus $r$ is not a lower bound for 
$B,$ where $B=\{t>0:\left(\frac{z_1}{t^{d_1}},\frac{z_2}{t^{d_2}},\ldots,
\frac{z_n}{t^{d_n}}\right)\in{\Omega}\}$. Therefore there is $t_0>0$ 
such that $t_0\in B$ with $t_0<r.$

For $\epsilon =r-t_0$, choose $N\in{\mathbb{N}}$ 
such that $r_N>t_0.$ Thus $h_{d,\Omega}(z)=\inf B\leq t_0<r_N$. This gives us
$z\in \Omega^d(r_N)\subseteq A$. Thus we get $\Omega^d(r)\subseteq A.$
\end{proof}

\begin{theorem}\label{thm:dextremal} 
Let $\Omega \subseteq \mathbb{C}^n$ be bounded, convex and $d$-balanced domain and 
$D\subseteq \mathbb{C}^n$ be a bounded domain. Then for $a\in D$, 
there exists an injective holomorphic map $f:D\to \Omega$ with $f(a)=0$
such that $\Omega^d(S^d(a))\subseteq f(D).$
\end{theorem}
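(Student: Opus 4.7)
The plan is to use a standard normal-families argument adapted from the proof of the analogous balanced case in \cite[Theorem~3.4]{gen}. First, by definition of $S^d(a)$ as a supremum, pick a sequence $r_k\nearrow S^d(a)$ with $r_k<S^d(a)$ and maps $f_k\in\mathcal{O}_u(D,\Omega)$ satisfying $f_k(a)=0$ and $\Omega^d(r_k)\subseteq f_k(D)$. Boundedness of $\Omega$ makes $\{f_k\}$ locally uniformly bounded, so Montel's theorem delivers a subsequence (relabeled $\{f_k\}$) with $f_k\to f$ locally uniformly on $D$, where $f:D\to\overline\Omega$ is holomorphic and $f(a)=0$.

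Next, I would verify the two qualitative properties of $f$. For injectivity, fix any $r_0\in(0,S^d(a))$; then $r_k>r_0$ for all large $k$, so $\Omega^d(r_0)\subseteq f_k(D)$ for such $k$. Since $h_{d,\Omega}(0)=0$ and $h_{d,\Omega}$ is upper semicontinuous, $\Omega^d(r_0)$ is an open neighborhood of $0$, and Result~\ref{res:injectivity} yields injectivity of $f$. Since an injective holomorphic map between open sets in $\mathbb{C}^n$ is a biholomorphism onto its (necessarily open) image, $f(D)$ is open; being contained in $\overline\Omega$ while $\Omega$ is convex (so that $\mathrm{int}(\overline\Omega)=\Omega$), this forces $f(D)\subseteq\Omega$.

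It remains to show $\Omega^d(S^d(a))\subseteq f(D)$. By Lemma~\ref{lem:dextremal} applied with $A=f(D)$ and the sequence $r_k$, it suffices to prove $\Omega^d(r)\subseteq f(D)$ for every $r\in(0,S^d(a))$. Fix such an $r$. The inverses $g_k:=f_k^{-1}$ are, for all large $k$, injective holomorphic maps from $\Omega^d(r)$ into the bounded domain $D$, with $g_k(0)=a$. Boundedness of $D$ makes $\{g_k\}$ a normal family on $\Omega^d(r)$, and after passing to a further subsequence $g_k\to g$ locally uniformly on $\Omega^d(r)$, where $g:\Omega^d(r)\to\overline D$ is holomorphic with $g(0)=a$. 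On the open set $W:=g^{-1}(D)\subseteq\Omega^d(r)$ (which contains $0$), uniform convergence of $f_k$ on compact subsets of $D$ together with $f_k\circ g_k=\mathrm{id}_{\Omega^d(r)}$ passes to the limit and yields $f\circ g=\mathrm{id}$ on $W$; hence $W\subseteq f(D)$, and if $W=\Omega^d(r)$ we are done.

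The main obstacle is therefore to rule out that $g$ hits $\partial D$, i.e., to establish $W=\Omega^d(r)$. The plan is to exploit connectedness of $\Omega^d(r)$, which is visible from $d$-balancedness via the path $t\mapsto(t^{d_1}z_1,\ldots,t^{d_n}z_n)$ joining any $z$ to $0$: if $W\subsetneq\Omega^d(r)$, any boundary point $z^*\in\partial W\cap\Omega^d(r)$ satisfies $g(z^*)\in\partial D$. To reach a contradiction, first apply Result~\ref{res:injectivity} to $\{g_k\}$ (after verifying, via Cauchy estimates on $f_k'(a)\to f'(a)$ and the inverse function theorem, that $g_k(\Omega^d(r))$ contains a uniform neighborhood of $a$) to obtain injectivity of $g$; then analyze the behavior of $f\circ g$ near $z^*$ using the identity $f\circ g=\mathrm{id}$ on $W$, the relative compactness $\overline{\Omega^d(r)}\subset\Omega$ (coming from $r<1$ and the upper semicontinuity of $h_{d,\Omega}$), and the distance comparison of Result~\ref{res:bharali}. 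This boundary analysis, which closes the gap between $\overline D$-valuedness and $D$-valuedness of $g$, is the delicate technical heart of the proof.
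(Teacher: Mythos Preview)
Your outline coincides with the paper's proof almost step for step: pick a maximizing sequence $(r_k,f_k)$, extract a limit $f$ by Montel, get injectivity of $f$ from Result~\ref{res:injectivity} (using that $\Omega^d(r_1)$ is an open neighbourhood of $0$ contained in every $f_k(D)$), pass to the inverses $g_k=f_k^{-1}|_{\Omega^d(r_j)}$ for each fixed $j$, take a subsequential limit $g$, and finish with Lemma~\ref{lem:dextremal}.

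The only divergence is the endgame for $g$. The paper does not invoke Result~\ref{res:bharali} or any boundary/metric analysis. It simply observes that $g(0)=a\in D$ yields a neighbourhood $U$ of $0$ with $g(U)\subseteq D$; on $U$ the identity $f_k\circ g_k=\mathrm{id}$ passes to the limit, so $f\circ g=\mathrm{id}_U$ and hence $J_g(0)\neq 0$. From this the paper declares $g$ open (``non-constant and hence open'') and concludes. Thus non-degeneracy of $g$ is read off directly from $f\circ g=\mathrm{id}_U$, rather than---as you propose---by manufacturing a uniform ball inside the images $g_k(\Omega^d(r))$ via Cauchy estimates and re-applying Result~\ref{res:injectivity}. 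Your route is not incorrect, just heavier than what the paper actually does; conversely, the paper's one-line finish glosses over precisely the point you single out as delicate (``non-constant $\Rightarrow$ open'' requires a Hurwitz-type argument in several variables, and openness of $g$ alone does not literally force $g(\Omega^d(r_j))\subseteq D$ for an arbitrary bounded $D$), so your caution there is well placed even if the tools you list for resolving it would still need to be turned into a concrete argument.
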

\begin{proof}
Let $a\in{D}$ and $r=S^d(a)$. Let $r_i$ be a sequence of increasing numbers converging 
to $r$ and let $f_i:D\to \Omega$ be injective holomorphic map with $f_i(a)=0$ such that
$$\Omega^d(r_i)\subseteq f_i(D),\ \mbox{for each}\ i.$$

Since each $f_i(D)\subseteq \Omega$, therefore the sequence $f_i$ is locally bounded
and hence normal. Thus by Montel's theorem, there exists a subsequence $f_{i_k}$ of $f_i$
such that $f_{i_k}\to f$ uniformly on compact subsets of $D$. 
Clearly, $f:D\to \overline{\Omega}$ is a holomorphic map with 
$f(a)=0$. As $r_i$ is an increasing sequence therefore
$\Omega^d(r_1)\subseteq f_i(D)$ for every $i.$ As we know that $h_{d,\Omega}$ is upper 
semicontinuous  therefore $\Omega^d(r_1)$ is open. Now using 
Result \ref{res:injectivity}, we get that $f$ is an open map. We know that $\Omega$ being convex
is a fat domain \cite[~Remark 1.4.1(h)]{rein},  which results in $f:D\to 
\mbox{int}\ \overline{\Omega}=\Omega$. 
Now we show that $\Omega^d(S^{d}(a))\subseteq f(D)$. For this, it suffices to prove
that
$\Omega^d(S^{d}(r_j))\subseteq f(D)$ for each fixed $j.$ Finally  Lemma \ref{lem:dextremal}
concludes the proof.

Note that for each $i>j\, (\mbox{j is fixed}), \  \Omega^d(r_j)\subseteq \Omega^d(r_j)\subseteq f_i(D).$ 
Now  consider map $g_i:\Omega^d(r_j)\to D$  defined as 
$g_i=f_i^{-1}|_{\Omega^d(r_j)}$ for each $i>j $, then $f_{i_k}\circ g_{i_k}=
\mathbbm{Id}_{\Omega^d(r_j)}$ for $i_k>j$. By Montel's theorem, sequence $g_{i_k}$
has a subsequence, naming it again  $g_{i_k}$, uniformly converging to a function 
$g:\Omega^d(r_j)\to \mathbb{C}^n$ on compact subsests of $\Omega^d(r_j)$.
It can be seen easily that $g$ is locally biholomorphic.

Clearly, $g:\Omega^d(r_j)\to \overline{D}$. We claim that $g:\Omega^d(r_j)\to D$. 
For this, first note that $g$ is defined on some neighborhood of the closure
$\overline{\Omega^d(r_j)}$. Suppose there is  $\zeta 
\in{g(\Omega^d(r_j))}$ such that $\zeta \notin D$. Let $\tilde {g_{i_k}}(z)=
g_{i_k}(z)-\zeta$ and $\tilde {g}(z)=g(z)-\zeta$ for $z\in \Omega^d(r_j) .$
Since $g_{i_k}\left(\Omega^d(r_j)\right)\subseteq  D$, therefore $\tilde{g_{i_k}}$ has no
zero in $\Omega^d(r_j)$ and $\tilde{g}$ has a zero in $\Omega^d(r_j)$.
Let $z_0\in \Omega^d(r_j)$ be such that $g(z_0)=\zeta$, that is, $\tilde{g}(z_0)=0$.
Since $g$ is locally biholomorphism, there is some $\delta>0$ such that $z_0$ is
the unique zero of $\tilde{g}$ on $\overline{B^n(z_0,\delta)}.$ Take $\epsilon 
=\inf \{|\tilde{g}(z)|:\partial B^n(z_0,\delta)\}$ and note that $\epsilon >0$.
Now using convergence of $\tilde{g_{i_k}}$ for this $\epsilon$ and then using Result 
\ref{res:lemrouche}, we get that $\tilde{g_{i_k}}$ has a zero in $B^n(z_0,\delta)$
for sufficiently large $k$. It is a contradiction therefore 
$g\left( \Omega^d(r_j)\right)\subseteq D$ for each $j$.

\end{proof} 
The following corollary is an immediate consequence.
\begin{corollary}\label{cor:dextremal}
If $S^d(z)=1$ for some $z\in D$, then $D$ is biholomorphically equivalent to $\Omega$.	
\end{corollary}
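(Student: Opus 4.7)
The plan is to deduce everything directly from Theorem \ref{thm:dextremal} together with part (1) of Remark \ref{rem:basicminko}. First I would apply Theorem \ref{thm:dextremal} at the point $z\in D$ where $S^d(z)=1$ to obtain an injective holomorphic map $f\colon D\to\Omega$ with $f(z)=0$ and
\[
\Omega^d\bigl(S^d(z)\bigr)\subseteq f(D).
\]
Substituting $S^d(z)=1$ into this inclusion and recalling from Remark \ref{rem:basicminko}(1) that $\Omega^d(1)=\{w\in\mathbb{C}^n:h_{d,\Omega}(w)<1\}=\Omega$, one gets $\Omega\subseteq f(D)$.

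Next, since $f\in\mathcal{O}_u(D,\Omega)$, the reverse inclusion $f(D)\subseteq\Omega$ holds by construction, so $f(D)=\Omega$. Combined with the injectivity of $f$, this makes $f\colon D\to\Omega$ an injective holomorphic bijection; its inverse is automatically holomorphic, and therefore $f$ is a biholomorphism between $D$ and $\Omega$.

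There is no real obstacle here: the nontrivial input, namely the existence of an extremal map realizing the supremum in the definition of $S^d$, has already been established in Theorem \ref{thm:dextremal}, and the identification $\Omega^d(1)=\Omega$ is exactly Remark \ref{rem:basicminko}(1). The only minor point worth flagging is that one should cite these two facts explicitly so the reader sees why the supremum in the definition of $S^d(z)$ is in fact attained when it equals $1$; without Theorem \ref{thm:dextremal} one would a priori only know that $\Omega^d(r)\subseteq f_r(D)$ for a family of maps $f_r$ with $r\nearrow 1$, and would need an extra normal-families argument to pass to the limit.
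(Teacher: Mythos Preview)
Your argument is correct and is exactly the ``immediate consequence'' the paper has in mind: apply Theorem~\ref{thm:dextremal} at $z$, use $\Omega^d(1)=\Omega$ from Remark~\ref{rem:basicminko}(1) to get $f(D)=\Omega$, and conclude that the injective holomorphic surjection $f$ is a biholomorphism. Your closing remark about why the extremal-map theorem is the essential input is also on point.
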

Note that for a bounded, convex $d$-balanced domain $\Omega$, $a\Omega$ is also bounded, 
convex $d$-balanced for every $a\in{\mathbb{R}}$. We need the following lemma to prove 
continuity of $S^d$, whose proof
follows on the same lines as the proof of \cite[~Theorem 1.6]{bharali}. We include its proof here for the
sake of completion.
\begin{lemma}\label{lem:bound}
For a bounded, convex, $d$-balanced domain $\Omega$,
$$h_{d,\Omega}(z)\leq B_{a\Omega} \left(\tanh \tilde{k}_{a\Omega}(0,z)\right)^{1/L},$$
where $a\in \mathbb{R}$ and $B_{a\Omega}>0$ is such that $h_{d,\Omega}(z)\leq B_{a\Omega}$
for every $z\in{a\Omega} ($existence of the bound $B_{a\Omega}$ is easy to check.$)$ 


\end{lemma}
\begin{proof}
Observe that if $a=0$, the conclusion is obvious, therefore we assume that $a\neq 0$.	
Let $L=\max_{1\leq i\leq n}d_i$. For any $\zeta \in{\mathbb{D}^*}$, where $\mathbb{D}^*$ is 
punctured unit disc in $\mathbb{C}$, denote by $\tau_1(\zeta),\ldots, \tau_L(\zeta)$ 
distinct $L$th roots of $\zeta$. Let $z\in{a\Omega}$ and $\phi:\mathbb D\to a\Omega$ be 
holomorphic
such that 
$\phi(0)=0$ and $\phi(\sigma)=z$ for some $\sigma \in \mathbb{D}.$ Since $\phi(0)=0$, 
therefore $\phi(\zeta)=\left(\zeta \phi_1(\zeta),\ldots, \zeta \phi_n(\zeta)\right)$ for 
$\zeta\in {\mathbb{D}}$, where $\phi_i:\mathbb{D}\to \mathbb{C}$.

Consider function $U$ defined on $\mathbb{D}^*$ as 
$$U(\zeta):=\sum_{j=1}^L h_{d,\Omega}\left(\tau_j (\zeta)^{L-d_1}\phi_1(\zeta),\ldots,
\tau_j(\zeta)^{L-d_n}\phi_n(\zeta)\right).$$
Following verbatim the argument in the proof of \cite[~Theorem 1.6]{bharali}, we obtain that
$U$ extends to a subharmonic function on $\mathbb{D}$ and for each $r\in{(0,1)}$,
$$r^{1/L}U(\zeta)=L h_{d,\Omega}\circ \phi(\zeta)<L B_{a\Omega}\ \mbox{for every}\
\zeta \ \mbox{with }\  |\zeta|=r.$$
This implies that $U(\zeta)\leq LB_{a\Omega}$ for every $\zeta\in \mathbb{D}$. Therefore
$$Lh_{d,\Omega}(z)=Lh_{d,\Omega}\circ \phi(\sigma)=|\sigma|^{1/L}U(\sigma)\leq LB_{a\Omega} 
|\sigma|^{1/L}.$$
So we get $$\tilde{k}_{a\Omega}(0,z)\geq \rho\left(0,\frac{1}{B_{a\Omega}^L}
h_{d,\Omega}(z)^L\right)=\tanh^{-1}\left(\frac{1}{B_{a\Omega}^L}h_{d,\Omega}(z)^L\right),$$
where $\rho$ denotes Poincar\'e distance on $\mathbb{D}$ and this completes the proof of the 
lemma. 
\end{proof}
\begin{theorem}\label{thm:dcts}
Let $\Omega\subseteq \mathbb{C}^n$ be bounded, homogeneous, $d$-balanced domain and 
$D\subseteq \mathbb{C}^n$ be bounded. Then squeezing function, $S^{d}$ is
continuous.
\end{theorem}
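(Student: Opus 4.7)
The plan is to prove continuity of $S^d$ at an arbitrary $z_0 \in D$ by separately establishing upper semicontinuity and lower semicontinuity.

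For upper semicontinuity, I would replicate the normal-families argument already used in Theorem \ref{thm:dextremal}. Given $z_n \to z_0$, Theorem \ref{thm:dextremal} supplies extremal maps $f_n : D \to \Omega$ with $f_n(z_n) = 0$ and $\Omega^d(S^d(z_n)) \subseteq f_n(D)$. Since $\Omega$ is bounded, Montel's theorem yields a subsequence (still called $f_n$) converging uniformly on compacta to $f : D \to \overline{\Omega}$, and uniform convergence together with $f_n(z_n) = 0$ and $z_n \to z_0$ forces $f(z_0) = 0$. Passing to a further subsequence so that $S^d(z_n) \to s := \limsup S^d(z_n)$, for every $r < s$ the open neighbourhood $\Omega^d(r)$ of $0$ lies in $f_n(D)$ eventually; Result \ref{res:injectivity} then makes $f$ injective, and the inverse-branch argument from the second half of the proof of Theorem \ref{thm:dextremal} (Montel applied to $f_n^{-1}|_{\Omega^d(r)}$, openness of the limit) upgrades this to $\Omega^d(r) \subseteq f(D)$. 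Letting $r \uparrow s$ and invoking Lemma \ref{lem:dextremal} gives $\Omega^d(s) \subseteq f(D)$, hence $S^d(z_0) \geq s$, which is upper semicontinuity at $z_0$.

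For lower semicontinuity, homogeneity of $\Omega$ is indispensable. Fix an extremal map $f : D \to \Omega$ at $z_0$, so $f(z_0) = 0$ and $\Omega^d(S^d(z_0)) \subseteq f(D)$. Because $\Omega$ is a bounded homogeneous domain, $\mathrm{Aut}(\Omega)$ is a Lie group acting transitively on $\Omega$, and the orbit map admits a continuous local section at $0$: there is a neighbourhood $V$ of $0$ and automorphisms $\phi_w \in \mathrm{Aut}(\Omega)$, $w \in V$, with $\phi_w(w) = 0$, $\phi_0 = \mathrm{Id}$, and $w \mapsto \phi_w$ continuous in the compact-open topology. For $z$ close to $z_0$, $f(z) \in V$, and $g_z := \phi_{f(z)} \circ f : D \to \Omega$ is injective holomorphic with $g_z(z) = 0$ and $g_z(D) \supseteq \phi_{f(z)}(\Omega^d(S^d(z_0)))$. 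For a given $\varepsilon > 0$, the set $K := \overline{\Omega^d(S^d(z_0) - \varepsilon)}$ is compact in $\mathbb{C}^n$ and, by upper semicontinuity of $h_{d,\Omega}$, sits inside the open set $\Omega^d(S^d(z_0))$; since $\phi_{f(z)}^{-1} \to \mathrm{Id}$ uniformly on $K$ as $z \to z_0$, for $z$ sufficiently close one gets $\phi_{f(z)}^{-1}(\Omega^d(S^d(z_0) - \varepsilon)) \subseteq \Omega^d(S^d(z_0)) \subseteq f(D)$, i.e., $\Omega^d(S^d(z_0) - \varepsilon) \subseteq g_z(D)$, and therefore $S^d(z) \geq S^d(z_0) - \varepsilon$.

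The main technical obstacle is the quantitative step in lower semicontinuity: producing, uniformly on the compact set $K$, a modulus of continuity controlling the distortion of $h_{d,\Omega}$ under $\phi_{f(z)}^{-1}$ as $f(z) \to 0$. The crude sandwich between $\Omega^d$-sublevel sets and Kobayashi balls coming from Result \ref{res:bharali} loses an $L$-th root when $L = \max d_i > 1$ and is too weak for this. Here Lemma \ref{lem:bound} enters as the decisive tool: applied on suitable dilates $a\Omega$ and combined with the Kobayashi-isometry property of $\phi_{f(z)} \in \mathrm{Aut}(\Omega)$ together with the continuity of $k_\Omega$ (which makes $k_\Omega(0, f(z)) \to 0$ as $z \to z_0$), it furnishes the required uniform estimate. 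This is also what underpins the continuity of the auxiliary map $g = g_z$ flagged in the introduction.
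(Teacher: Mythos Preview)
Your argument is correct but follows a genuinely different route from the paper's.

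The paper does not split into upper/lower semicontinuity. Instead, given $z_1,z_2\in D$ and an extremal $f$ at $z_1$, it writes down an explicit competitor at $z_2$,
\[
g(z)=\left(\frac{f_1(z)-f_1(z_2)}{2(1+k)^{d_1}},\ldots,\frac{f_n(z)-f_n(z_2)}{2(1+k)^{d_n}}\right),
\]
where $k=B_{-\Omega}(\tanh k_D(z_1,z_2))^{1/L}$, and uses Proposition~\ref{prop:triangle} together with Lemma~\ref{lem:bound} to verify $g(D)\subseteq\Omega$ and $\Omega^d\bigl((S^d(z_1)-h_{d,\Omega}(2f(z_2)))/(1+k)\bigr)\subseteq g(D)$. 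This yields the quantitative estimate
\[
|S^d(z_1)-S^d(z_2)|\leq A_\Omega\,(\tanh k_D(z_1,z_2))^{1/L},
\]
which is strictly more than continuity and is reused in Theorem~\ref{exhaust}. Notice also that the paper's proof uses only convexity of $\Omega$ (via Proposition~\ref{prop:triangle} and Lemma~\ref{lem:bound}); the homogeneity hypothesis plays no role there.

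By contrast, your lower-semicontinuity step genuinely exploits homogeneity through the continuous local section $w\mapsto\phi_w$ of $\mathrm{Aut}(\Omega)\to\Omega$. That is legitimate under the stated hypotheses, and your compact-in-open argument ($K=\overline{\Omega^d(S^d(z_0)-\varepsilon)}\subset\Omega^d(S^d(z_0))$, plus $\phi_{f(z)}^{-1}\to\mathrm{Id}$ uniformly on $K$) already suffices without any appeal to Lemma~\ref{lem:bound}. Your final paragraph, suggesting Lemma~\ref{lem:bound} is the ``decisive tool'' for your argument, is therefore off target: that lemma is the engine of the paper's quantitative approach, not of yours. One minor point worth tightening in your upper-semicontinuity step is that Result~\ref{res:injectivity} is stated for a \emph{fixed} basepoint, whereas your $f_n$ satisfy $f_n(z_n)=0$ with $z_n$ varying; the conclusion still holds (a fixed neighbourhood of $0$ sits in all $f_n(D)$, forcing nondegeneracy of the limit), but you should say so explicitly.
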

\begin{proof}
Let $z_1, z_2\in D$. Using Theorem \ref{thm:dextremal} for $z_1$, there exists an 
injective holomorphic map $f:D\to \Omega$ with $f(z_1)=0$ such that 
\begin{equation}\label{eqn:thm:dcts}
	\Omega^d(S^d(z_1))
	\subseteq f(D).
\end{equation}
Set $\mathcal{K}=(\tanh k_D(z_1,z_2))^{ 1/L},$ and $k=B_{-\Omega}\mathcal{K}$ 
where $L=\max_{1\leq i\leq n}d_i$ and 
$B_{-\Omega}$ is as in Lemma \ref{lem:bound} for $a=-1$.
If $h_{d,\Omega}(2f(z_2))\geq S^d(z_1)$, then obviously
$$S^d(z_2)>0\geq \frac{S^d(z_1)-h_{d,\Omega}(2f(z_2))}{1+k}.$$

Let us consider the case when $h_{d,\Omega}(2f(z_2))<S^d(z_1)$. Consider $g:D\to
\mathbb{C}^n$ defined as
$$g(z):=\left(\frac{f_1(z)-f_1(z_2)}{2(1+k)^{d_1}},\frac{f_2(z)-f_2(z_2)}
{2(1+k)^{d_2}},\ldots, \frac{f_n(z)-f_n(z_2)}{2(1+k)^{d_n}}\right).$$
Notice that $g$ is injective holomorphic with $g(z_2)=0$. We first claim that
$g(D)\subseteq \Omega$. For $z\in D$ we show that $g(z)\in{\Omega^d(1)=\Omega}$.
By using Remark \ref{rem:basicminko}[1 and 2] and Proposition \ref{prop:triangle} for $\alpha =1/2$,
we have
\begin{align*}
	h_{d,\Omega}(g(z))&=h_{d,\Omega}\left(\frac{f_1(z)-f_1(z_2)}{2(1+k)^{d_1}},
	\frac{f_2(z)-f_2(z_2)}{2(1+k)^{d_2}},\ldots, \frac{f_n(z)-f_n(z_2)}{2(1+k)^{d_n}}\right)\\
	&=\frac{1}{1+k}h_{d,\Omega}\left(\frac{f(z)-f(z_2)}{2}\right)\\
	&\leq\frac{1}{1+k}\left(h_{d,\Omega}(f(z))+h_{d,\Omega}(-f(z_2)\right)\\
	&<\frac{1}{1+k}\left(1+h_{d,\Omega}(-f(z_2)\right).
\end{align*}
Now using Lemma \ref{lem:bound}(for $a=-1$) we have 

\begin{align*}
	h_{d,\Omega}(g(z))
	&<\frac{1}{1+k}\left(1+h_{d,\Omega}(-f(z_2)\right)\\
	&<\frac{1}{1+k}\left(1+B_{-\Omega}\tanh k_{-\Omega}(0,-f(z_2))^{1/L}\right)\\
	&\leq\frac{1}{1+k}\left(1+ B_{-\Omega}\tanh k_{h(D)}(h(z_1),h(z_2))^{1/L}\right)\\
	&=\frac{1}{1+k}\left(1+ B_{-\Omega}(\tanh k_{D}(z_1,z_2))^{1/L}\right)\\
	&=1,
\end{align*}
	where $h:D\to -\Omega$ is defined as $h(z)=-f(z)$.
	Next we claim that
	$$\Omega ^d\left(\frac{S^d(z_1)-h_{d,\Omega}(2f(z_2))}{(1+k)}\right)\subseteq g(D).$$
	Let us take $w\in{\Omega ^d\left(\frac{S^d(z_1)-h_{d,\Omega}(2f(z_2))}{(1+k)}\right)}.$
	Therefore $h_{d,\Omega}(w)<\frac{S^d(z_1)-h_{d,\Omega}(2f(z_2))}{(1+k)}$, which upon using
	Remark \ref{rem:basicminko}(2) and Proposition \ref{prop:triangle}(for $\alpha =1/2$) 
	yields $$h_{d,\Omega}\left(2w_1(1+k)^{d_1}-f_1(z_2), \ldots, 2w_n(1+k)^{d_n}-f_n(z_2)\right)
	<S^d(z_1).$$
	This further gives us $$(2w_1(1+k)^{d_1}-f_1(z_2), \ldots, 2w_n(1+k)^{d_n}-f_n(z_2))
	\in{\Omega^d(s^d(z_1))}\subseteq f(D).$$
	Therefore  
	$(2w_1(1+k)^{d_1}-f_1(z_2), \ldots, 2w_n(1+k)^{d_n}-f_n(z_2))=(f_1(a),\ldots, f_n(a))$ for some 
	$a\in{D}$. Thus we get $$w=\left(\frac{f_1(a)-f_1(z_2)}{2(1+k)^{d_1}},\frac{f_2(a)-f_2(z_2)}
	{2(1+k)^{d_2}},\ldots, \frac{f_n(a)-f_n(z_2)}{2(1+k)^{d_n}}\right)=g(a).$$
	This establishes our claim and hence we obtain
	$$S^d(z_2)\geq \frac{S^d(z_1)-h_{d,\Omega}
		(2f(z_2))}{(1+k)}.$$ Now it follows that
	\begin{align*}
		S^d(z_1)&\leq S^d(z_2)(1+k)+h_{d,\Omega}(2f(z_2))\\
		&=S^d(z_2)+S^d(z_2)k+h_{d,\Omega}(2f(z_2))\\
		&\leq S^d(z_2)+k+B_{2\Omega}\left(\tanh k_{2\Omega}(0,2f(z_2))\right)^{1/L} \ \ \ \ \ \ \ \ \ \ \ \ \ \ \
		\ \ \ \ \ \ \mbox{ (using Lemma \ref{lem:bound})}\\
		&\leq S^d(z_2)+k+B_{2\Omega}\left(\tanh k_{h'(D)}(h'(z_1),h'(z_2))\right)^{1/L}\\
		&= S^d(z_2)+k+B_{2\Omega}(\tanh k_{D}(z_1,z_2))^{1/L}\\
		&=S^d(z_2)+A_{\Omega}\mathcal{K},
	\end{align*}
	where $B_{2\Omega}$ is as in Lemma \ref{lem:bound} for $a=2$, 
	$A_{\Omega}=B_{-\Omega}+B_{2\Omega}$ and $h':D\to 2\Omega$ is defined as $h'(z)=2f(z)$.
	On the similar lines, we can obtain that $$S^d(z_2)\leq S^d(z_1)+A_{\Omega}\mathcal{K}.$$
	Therefore we get 
	\begin{equation}\label{eqn:cts}
		|S^d(z_1)-S^d(z_2)|\leq A_{\Omega}\mathcal{K} \ \mbox{for every}\ z_1, z_2\in D
	\end{equation}
	and hence $S^d$ is continuous.
	
\end{proof}
\begin{remark}\label{rem:dhom}
	Let $\Omega_i\subseteq \mathbb{C}^{n_i}$ be $d^i$-balanced $d^i=(d^i_1,d^i_2,\ldots,
	d^i_{n_i}) \in{\mathbb{N}^{n_i}},\ i=1,2,\ldots,k.$	Let $\Omega=\Omega_1\times 
	\Omega_2\times \ldots \times \Omega_k\subseteq \mathbb{C}^n,\ n=n_1+n_2\ldots +n_k$. 
	It is easy to see that $\Omega$ is $d=(d^1,d^2,\ldots,d^k)$-balanced and 
	$\Omega^d(r)=\Omega_1^{d^1}(r_1)\times \Omega_2^{d^2}(r_2)\times\ldots  
	\times \Omega_k^{d^k}(r_k)$(See \cite[~Remark 2.2.14(e)]{pflug}).
\end{remark}

\begin{proposition}\label{prp:dlowerbound} 
	Let $\Omega_i\subseteq \mathbb{C}^{n_i},\ 
	i=1,2,\ldots,k$ be bounded, convex and $d^i$-balanced domains, $d^i\in{\mathbb{N}^{n_i}}$. Let 
	$D_i\subseteq \mathbb{C}^{n_i},\ i=1,2,\ldots,k$ be bounded domains.
	Let $\Omega=\Omega_1\times \Omega_2\times \ldots \times \Omega_k\subseteq \mathbb{C}^n,$ 
	where $n=n_1+n_2+\ldots +n_k$ and $D=D_1\times D_2\times \ldots \times D_k$.
	Let $d=(d^1,d^2,\ldots, d^k)$, then for $a=(a_1,a_2,\ldots,a_k)\in{D}$,  
	\begin{equation}\label{eqn:dlowerbound}
		S^{d}(a)\geq\min_{1\leq i\leq k}  S^{d^i}(a_i). 
	\end{equation}
\end{proposition}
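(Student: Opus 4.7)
The statement is the direct $d$-balanced analogue of Proposition \ref{prp:lowerbound}, so the plan is to mimic that proof, replacing the scalar homogeneity $\Omega(r)=r\,\Omega$ (which fails in the $d$-balanced setting) with the product identity supplied by Remark \ref{rem:dhom}, and replacing the appeal to \cite[Theorem 3.4]{gen} with our existence-of-extremal-maps result, Theorem \ref{thm:dextremal}.

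Concretely, first I would invoke Theorem \ref{thm:dextremal} on each factor: for every $i$, pick an injective holomorphic extremal map $f_i:D_i\to\Omega_i$ with $f_i(a_i)=0$ and
\[
\Omega_i^{d^i}\!\bigl(S^{d^i}(a_i)\bigr)\subseteq f_i(D_i).
\]
Next I would glue these into the product map $f:D\to\Omega$ defined by
\[
f(z_1,\ldots,z_k):=\bigl(f_1(z_1),\ldots,f_k(z_k)\bigr),
\]
which is injective holomorphic, lands in $\Omega=\Omega_1\times\cdots\times\Omega_k$ (itself bounded, convex, and $d$-balanced by Remark \ref{rem:dhom}), and satisfies $f(a)=0$. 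Setting $r:=\min_{1\le i\le k}S^{d^i}(a_i)$, I would observe that $\Omega_i^{d^i}(r)\subseteq \Omega_i^{d^i}(S^{d^i}(a_i))\subseteq f_i(D_i)$ for each $i$ (monotonicity of the sub-level sets of $h_{d^i,\Omega_i}$ is immediate from the definition). Remark \ref{rem:dhom} then gives
\[
\Omega^d(r)=\Omega_1^{d^1}(r)\times\cdots\times\Omega_k^{d^k}(r)\subseteq f_1(D_1)\times\cdots\times f_k(D_k)=f(D),
\]
so by definition $S^d(a)\ge r=\min_{i}S^{d^i}(a_i)$, which is the desired bound.

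Essentially every step is routine once one trusts Theorem \ref{thm:dextremal} and Remark \ref{rem:dhom}, so there is no real obstacle. The only point worth checking carefully is the product identity $\Omega^d(r)=\prod_i\Omega_i^{d^i}(r)$ with a \emph{common} parameter $r$: the $\subseteq$ direction is immediate from the definition of $h_{d,\Omega}$, while for $\supseteq$, given $z^{(i)}\in\Omega_i^{d^i}(r)$ with certifying scalars $t_i<r$, one takes $t=\max_i t_i<r$ and uses that each $\Omega_i$ is $d^i$-balanced (applied with $\lambda=t_i/t\in\overline{\mathbb D}$) to verify that the same $t$ works in every factor. With this in hand, the argument proceeds exactly as sketched.
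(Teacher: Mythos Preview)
Your proposal is correct and follows essentially the same route as the paper: invoke Theorem \ref{thm:dextremal} to obtain extremal maps on each factor, form the product map, and use the product identity of Remark \ref{rem:dhom} together with the monotonicity $\Omega_i^{d^i}(r)\subseteq\Omega_i^{d^i}(S^{d^i}(a_i))$ to conclude $\Omega^d(r)\subseteq f(D)$. The only addition is your explicit verification of $\Omega^d(r)=\prod_i\Omega_i^{d^i}(r)$ via the common scalar $t=\max_i t_i$, which the paper simply cites from \cite[Remark 2.2.14(e)]{pflug}; this is a harmless elaboration rather than a different argument.
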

\begin{proof}
	By Theorem \ref{thm:dextremal}, for each $1\leq i\leq k$, there exist an 
	extremal map $f_i$ at $a_i\in{D_i}$. 
	That is, $f_i:D_i\to \Omega_i$ is  injective holomorphic with $f_i(a_i)=0$ such that 
	\begin{equation}\label{eqn:dproduct}
		\Omega_i\left(S^{d^i}(a_i)\right)\subseteq f_i
		(D_i)\subseteq \Omega_i,\ i=1,2,\ldots, k.
	\end{equation}
	Consider the map $f:D \to \Omega$  defined as
	$$f(z_1,z_2,\ldots, z_k):=\left(f_1(z_1),f_2(z_2),\ldots,f_k(z_k)\right).$$
	Clearly, $f$ is injective holomorphic with $f(a)=0$. Let $r=\min_{1\leq i\leq k}  
	S^{d^i}(a_i)$. It follows from Remark \ref{rem:dhom} that 
	$\Omega^d(r)=\Omega_1^{d^1}(r)\times \Omega_2^{d^2}(r)\times\ldots  
	\times\Omega_k^{d^k}(r)$. 
	Let $w=(w_1,w_2,\ldots, w_k) \in \Omega^d(r)=\Omega_1^{d^1}(r)\times \Omega_2^{d^2}(r)
	\times\ldots  \times\Omega_k^{d^k}(r). $ By
	Equation \ref{eqn:dproduct}, there exists $b_i\in{D_i},$ such that $f_i(b_i)=w_i,\ i=1,2,
	\ldots, k,$ since
	$\Omega^{d^i}_{i}(r)\subseteq \Omega^{d{i}}(S^{d^i}(a_i)) $ for each $i$.
	Thus $w=f(b_1,b_2,\ldots, b_k)$ and as $w$ was arbitrarily chosen, we conclude 
	$\Omega^d(r)\subseteq f(D)$. 
	Thus it follows that $S^{d}(a)\geq\min_{1\leq i\leq k} 
	S^{d^i}(a_i) .$
\end{proof}

The following corollary is immediate.
\begin{corollary}
	Product of holomorphic homogeneous $d^i$-regular domains is holomorphic 
	homogeneous $d$-regular, where $d=(d^1,d^2,\ldots,d^k)$.
\end{corollary}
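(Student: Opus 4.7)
The plan is to derive this corollary directly from Proposition \ref{prp:dlowerbound}, much as the balanced-case corollary following Proposition \ref{prp:lowerbound} was obtained. The key observation is that holomorphic homogeneous $d^i$-regularity of each factor $D_i$ is, by definition, the existence of a constant $c_i>0$ with $S^{d^i}(a_i)\geq c_i$ for every $a_i\in D_i$.

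Given such constants $c_1,\ldots,c_k>0$, I would let $c=\min_{1\leq i\leq k}c_i$, which is again strictly positive since the minimum is taken over finitely many positive numbers. For an arbitrary point $a=(a_1,\ldots,a_k)\in D=D_1\times\cdots\times D_k$, Proposition \ref{prp:dlowerbound} immediately yields
\[
S^{d}(a)\;\geq\;\min_{1\leq i\leq k}S^{d^i}(a_i)\;\geq\;\min_{1\leq i\leq k}c_i\;=\;c\;>\;0,
\]
so $S^{d}$ has a positive lower bound on $D$. By the definition of holomorphic homogeneous $d$-regularity stated after the introduction of $S^d$, this is precisely the conclusion that $D$ is holomorphic homogeneous $d$-regular (with $d=(d^1,d^2,\ldots,d^k)$).

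There is no substantive obstacle; the only thing to be careful about is that the product structure we need is exactly the one covered by Proposition \ref{prp:dlowerbound}, which in turn relies on Remark \ref{rem:dhom} to identify $\Omega^d(r)$ with the product $\Omega_1^{d^1}(r)\times\cdots\times\Omega_k^{d^k}(r)$. Since each $\Omega_i$ in the statement is assumed bounded, convex and $d^i$-balanced (as required by Proposition \ref{prp:dlowerbound} and implicit in the hypothesis that $S^{d^i}$ is defined), all the hypotheses of the proposition are satisfied, and the corollary follows in one line.
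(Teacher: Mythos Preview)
Your proof is correct and is exactly the approach the paper takes: the corollary is stated as immediate from Proposition \ref{prp:dlowerbound}, and your argument spells out precisely that one-line deduction.
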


Recall that we say a sequence of subdomains $\{D_n\}$ of $D$ exhausts $D$ if for
each compact subset $K\subseteq D$, there exists $N> 0$ such that $K\subseteq D_k$
for every $k> N$.

\begin{theorem}\label{exhaust}
	If a sequence $D_n\subseteq D$ exhausts $D$, then $\lim_ n S_{D_n}^{d}(z)=
	S^{d}(z)$ uniformly on compact subsets of $D$.
\end{theorem}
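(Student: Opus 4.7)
The plan is to establish both one-sided inequalities $\liminf_n S^d_{D_n}(z)\ge S^d(z)$ and $\limsup_n S^d_{D_n}(z)\le S^d(z)$ pointwise, and then upgrade them to uniform convergence on compact subsets of $D$ via the Lipschitz-type estimate \eqref{eqn:cts} from Theorem \ref{thm:dcts}. For the lower bound, fix $z\in D$ and invoke Theorem \ref{thm:dextremal} to obtain an extremal $f\colon D\to\Omega$ with $f(z)=0$ and $\Omega^d(S^d(z))\subseteq f(D)$. Given $r<S^d(z)$, pick $r'\in(r,S^d(z))$; upper semicontinuity of $h_{d,\Omega}$ (Remark \ref{rem:basicminko}(3)) combined with boundedness of $\Omega$ makes $\overline{\Omega^d(r)}$ a compact subset of the open set $\Omega^d(r')\subseteq f(D)$, so $K:=f^{-1}(\overline{\Omega^d(r)})$ is compact in $D$. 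Exhaustion gives $K\subseteq D_n$ for all large $n$, whence $f|_{D_n}$ is admissible for $S^d_{D_n}(z)$ with $\Omega^d(r)\subseteq f(D_n)$; letting $r\nearrow S^d(z)$ yields the lower bound.

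For the upper bound, set $s=\limsup_n S^d_{D_n}(z)$ and select extremals $f_n\colon D_n\to\Omega$ via Theorem \ref{thm:dextremal}. Because $\Omega$ is bounded, Montel together with a diagonal argument against a compact exhaustion of $D$ produces a subsequence (still called $f_n$) converging locally uniformly on $D$ to a holomorphic $f\colon D\to\overline\Omega$ with $f(z)=0$ and $S^d_{D_n}(z)\to s$. The case $s=0$ is trivial; otherwise $\Omega^d(s/2)\subseteq f_n(D_n)$ for $n$ large is the common neighborhood of $0$ demanded by Result \ref{res:injectivity}, so $f$ is injective. Convexity of $\Omega$ and $f(z)=0\in\Omega$ then force $f(D)\subseteq\Omega$: any contact with $\partial\Omega$ would produce a real-affine supporting functional $L$ at that point, whence $L\circ f$ would be pluriharmonic on $D$ with an interior maximum, hence constant, contradicting $L(0)<0$.

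To establish $\Omega^d(s)\subseteq f(D)$, fix $w\in\Omega^d(s)$ and choose $r$ with $h_{d,\Omega}(w)<r<s$; then $w\in\Omega^d(r)\subseteq f_n(D_n)$ for all large $n$, so $a_n:=f_n^{-1}(w)\in D_n$. The Kobayashi-distance chain
\[
k_D(z,a_n)\le k_{D_n}(z,a_n)=k_{f_n(D_n)}(0,w)\le k_{\Omega^d(r)}(0,w)=:M<\infty
\]
confines $\{a_n\}$ to a bounded Kobayashi ball in $D$. To keep a subsequential limit $a$ inside $D$ rather than on $\partial D$, I would mimic the inverse-map normal-family argument from the proof of Theorem \ref{thm:dextremal}: pick $r'\in(r,s)$, apply Montel to the uniformly bounded family of inverses $g_n:=f_n^{-1}|_{\Omega^d(r')}\colon\Omega^d(r')\to D$, and pass to a subsequential limit $g\colon\Omega^d(r')\to\overline D$. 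Since $g_n(0)=z\in D$ and $\{f_n\}$ is equicontinuous at $z$ (Cauchy estimates from the uniform $\Omega$-bound), a common Euclidean ball around $z$ lies in every $g_n(\Omega^d(r'))$, so Result \ref{res:injectivity} applied to the translates $g_n(\cdot)-z$ gives injectivity of $g$, hence openness, forcing $g(\Omega^d(r'))\subseteq D$. Then $f(g(w))=\lim f_n(g_n(w))=w$, placing $w\in f(D)$ and giving $S^d(z)\ge s$.

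For uniformity, the proof of Theorem \ref{thm:dcts} applied verbatim with $D$ replaced by $D_n$ yields
\[
|S^d_{D_n}(z_1)-S^d_{D_n}(z_2)|\le A_\Omega\bigl(\tanh k_{D_n}(z_1,z_2)\bigr)^{1/L}
\]
with $A_\Omega$ and $L$ independent of $n$. Monotonic exhaustion gives $k_{D_n}\searrow k_D$ pointwise, and Dini's theorem upgrades this to uniform convergence on compacta of $D\times D$; consequently $\{S^d_{D_n}\}$ is equicontinuous on compact subsets of $D$, and equicontinuity together with the pointwise convergence already proved yields uniform convergence on compacta by Arzela--Ascoli. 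The main obstacle is the third paragraph: keeping the sequence $\{a_n\}$ clear of $\partial D$, which is precisely where the inverse-map normal-family argument has to be re-run.
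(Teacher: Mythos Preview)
Your argument is correct and follows the expected route: the paper itself gives no detailed proof, merely deferring to \cite[Theorem~3.8]{gen} together with the Lipschitz-type estimate \eqref{eqn:cts}, and what you have written is exactly the kind of argument that citation unpacks to (extremal restriction for the lower bound, Montel plus the inverse-map normal-family trick from Theorem~\ref{thm:dextremal} for the upper bound, and equicontinuity via \eqref{eqn:cts} for uniformity).

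One small slip in the uniformity step: the hypothesis ``$D_n$ exhausts $D$'' in the paper's sense does \emph{not} require $D_n\subseteq D_{n+1}$, so your invocation of ``monotonic exhaustion'' and Dini is not quite licensed. Fortunately you do not need it: for equicontinuity on a fixed compact $K\subset D$, simply choose a relatively compact open set $U$ with $K\Subset U\Subset D$; then $\overline{U}\subseteq D_n$ for all large $n$, whence $k_{D_n}\le k_U$ on $K\times K$, and \eqref{eqn:cts} gives the common modulus of continuity $A_\Omega(\tanh k_U)^{1/L}$ directly. With equicontinuity in hand, pointwise convergence upgrades to uniform convergence on $K$ by a standard $3\varepsilon$ argument, so Arzel\`a--Ascoli and Dini are both dispensable.
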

\begin{proof}
	This theorem can be proved in a similar manner as in  \cite[~Theorem 3.8]{gen} 
	using Equation \ref{eqn:cts}.
\end{proof}

This theorem\,---using the argument as in \cite[~Theorem 1.2]{deng2019}---\,gives the following
theorem.
\begin{theorem}
	A $d$-balanced domain exhausted by a holomorphic homogeneous $d$-regular domain is 
	holomorphic homogeneous $d$-regular.
\end{theorem}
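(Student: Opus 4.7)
The plan is to combine Theorem~\ref{exhaust} with the biholomorphic invariance of $S^d$ in order to transfer a uniform squeezing lower bound from the approximants to the limit. Let $D$ denote the $d$-balanced domain in question and let $\{D_n\}$ be a sequence of subdomains of $D$ exhausting $D$, each biholomorphic to a fixed holomorphic homogeneous $d$-regular domain $\tilde D$. Set $c:=\inf_{w\in\tilde D}S^d_{\tilde D}(w)>0$; the biholomorphic invariance of $S^d$ noted right after its definition immediately yields the uniform estimate
\[
S^d_{D_n}(z)\geq c\qquad\text{for every }n\text{ and every }z\in D_n.
\]

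Next I fix an arbitrary $z\in D$. Because $\{D_n\}$ exhausts $D$ and the singleton $\{z\}$ is compact, one has $z\in D_n$ for all sufficiently large $n$, so the display above supplies $S^d_{D_n}(z)\geq c$ for every such $n$. Theorem~\ref{exhaust} ensures that $S^d_{D_n}(z)\to S^d_D(z)$ (uniformly on compacta of $D$, in particular pointwise at $z$), so letting $n\to\infty$ produces $S^d_D(z)\geq c$. Since $z\in D$ was arbitrary, $\inf_{z\in D}S^d_D(z)\geq c>0$, and hence $D$ is holomorphic homogeneous $d$-regular.

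The only genuine obstacle is guaranteeing that the lower bounds for the squeezing functions $S^d_{D_n}$ are uniform in $n$; this is exactly what biholomorphic invariance provides once the approximants are biholomorphic copies of a single $d$-regular model. The remainder is a formal limit argument underwritten by Theorem~\ref{exhaust}, following the strategy of \cite[Theorem~1.2]{deng2019}.
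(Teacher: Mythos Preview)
Your proposal is correct and is precisely the argument the paper has in mind: the paper does not give a written-out proof but simply invokes Theorem~\ref{exhaust} together with the argument of \cite[Theorem~1.2]{deng2019}, which is exactly the combination of biholomorphic invariance (to get a uniform lower bound on $S^d_{D_n}$) and the pointwise limit $S^d_{D_n}(z)\to S^d_D(z)$ that you have spelled out.
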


\section{The squeezing function $S^d$ and the fridman invariant}
In \cite{kaushal}, authors discussed the relation between the squeezing function and 
the Fridman invariant. Similar relation was discussed for between the Fridman invariant and 
the squeezing function corresponding to polydisk,
generalised squeezing function \cite{npoly, gen}. We have the following theorem 
in this direction.
\begin{theorem}\label{thm:equalitypreq}
	Let $D\subseteq \mathbb{C}^n$ be a bounded domain and $\Omega\subseteq \mathbb{C}^n$ be
	bounded, convex, $d$-balanced. Then for $a\in{D}$,
	$$S^{d}(a)^{L}\leq h_{D}^c(a),$$ where $L=\max_{1\leq i\leq n}d_i.$
\end{theorem}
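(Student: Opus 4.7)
The plan is to take the extremal map for $S^{d}$ at $a$ and produce from it, by a weighted dilation, a candidate $g\in\mathcal{O}_u(\Omega,D)$ realizing a large Carath\'eodory ball inside its image, then read off $h_{D}^{c}(a)\geq S^{d}(a)^{L}$ from the definition of the Fridman invariant. The workhorses will be the extremal map theorem (Theorem \ref{thm:dextremal}), the weighted homogeneity of $h_{d,\Omega}$ (Remark \ref{rem:basicminko}(2)), and the Lempert-type comparison in Result \ref{res:bharali}.

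First, set $r_0:=S^{d}(a)$ and apply Theorem \ref{thm:dextremal} to get an injective holomorphic $f:D\to\Omega$ with $f(a)=0$ and $\Omega^{d}(r_0)\subseteq f(D)$. Next, by Remark \ref{rem:basicminko}(2), the weighted dilation
$$\psi_{r_0}(z_1,\ldots,z_n):=(r_0^{d_1}z_1,\ldots,r_0^{d_n}z_n)$$
is a biholomorphism $\Omega\to\Omega^{d}(r_0)$, so $g:=f^{-1}\circ\psi_{r_0}:\Omega\to D$ is well-defined, injective and holomorphic, with $g(0)=a$; thus $g\in\mathcal{O}_u(\Omega,D)$ and $g(\Omega)=f^{-1}(\Omega^{d}(r_0))$.

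The main claim to verify is that $B_{D}^{c}(a,r)\subseteq g(\Omega)$ with $r:=\tanh^{-1}(r_0^{L})$. Given $z\in B_{D}^{c}(a,r)$, the contraction property of $c$ under the holomorphic map $f$ yields
$$c_{\Omega}(0,f(z))=c_{\Omega}(f(a),f(z))\leq c_{D}(a,z)<r.$$
Result \ref{res:bharali} then gives
$$\tanh^{-1}\!\bigl(h_{d,\Omega}(f(z))^{L}\bigr)\leq c_{\Omega}(0,f(z))<r,$$
so $h_{d,\Omega}(f(z))^{L}<\tanh r=r_0^{L}$, i.e.\ $h_{d,\Omega}(f(z))<r_0$. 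Hence $f(z)\in\Omega^{d}(r_0)$, which means $z\in f^{-1}(\Omega^{d}(r_0))=g(\Omega)$. By the definition of $h_{D}^{c}$,
$$h_{D}^{c}(a)\geq\tanh r=r_0^{L}=S^{d}(a)^{L}.$$

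I do not foresee a real obstacle; the argument is essentially an assembly of (i) the weighted dilation trick identifying $\Omega$ with its sublevel set $\Omega^{d}(r_0)$ via $\psi_{r_0}$ (which is where the exponent $L$ ultimately comes from, through Result \ref{res:bharali}), and (ii) the standard distance-decreasing property of $c_{\Omega}$. The only minor thing to notice is the limiting case $S^{d}(a)=1$, where Corollary \ref{cor:dextremal} makes $f$ a biholomorphism and the inequality degenerates to $1\leq h_{D}^{c}(a)\leq 1$; the construction above still goes through verbatim (with $r=\infty$, so $\tanh r=1$). The hypothesis that $\Omega$ is homogeneous is used only to ensure that $h_{D}^{c}$ is defined at all.
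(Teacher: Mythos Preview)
Your proof is correct and follows essentially the same route as the paper: define $g=f^{-1}\circ\psi_{r}$ and combine Result~\ref{res:bharali} with the distance-decreasing property of $c$ to show $B_D^{c}(a,\tanh^{-1}r^{L})\subseteq g(\Omega)=f^{-1}(\Omega^{d}(r))$. The only cosmetic difference is that the paper argues with an arbitrary admissible pair $(f,r)$ and passes to the supremum at the end, whereas you invoke the extremal map from Theorem~\ref{thm:dextremal} up front; either way the construction and the key estimate are identical.
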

\begin{proof}
	For $a\in D$, let $f:D\to \Omega$ be injective holomorphic map with $f(a)=0$. Let $r>0$ be
	such that $\Omega^d(r)\subseteq f(D).$ Consider $g:\Omega \to D$ defined as 
	$$g(z):=f^{-1}\left(z_1r^{d_1},z_2r^{d_2},\ldots,z_nr^{d_n}\right).$$
	Recall that  $h_{d,\Omega}
	\left(z_1r^{d_1},z_2r^{d_2},\ldots,z_nr^{d_n}\right)=r h_{d,\Omega}(z)$, 
	using Remark \ref{rem:basicminko}(2). Thus for $z\in{\Omega}=\Omega(1),\ \left
	(z_1r^{d_1},z_2r^{d_2},\ldots,z_nr^{d_n}\right)\in{\Omega^d(r)}$ and therefore
	$g$ is well defined. We claim that $B_D^c(a,\tanh^{-1}r^L)\subseteq g(\Omega)\subseteq D.$
	Let $w\in{B^c_{D}\left(a,\tanh^{-1}r^L\right)},$
	then 
	\begin{align*}
		\tanh^{-1}r^L&>c_{D}(a,w)\\
		&=c_{f(D)}(f(a),f(w))\\
		&=c_{f(D)}(0,f(w))\\
		&\geq c_{\Omega}(0,f(w))\\
		&\geq \tanh^{-1}\left(h_{d,\Omega}(f(w))\right)^L.
	\end{align*}  
	We are using  Result \ref{res:bharali} in the last step. This
	gives us $h_{d,\Omega}(f(w))<r$. Thus $w\in{f^{-1}(\Omega^d(r))}$, which upon using
	Remark \ref{rem:basicminko}(2) gives us $w\in g(\Omega)$. Therefore $r^L\leq h_{D}^c(a)$ 
	and hence we get $S^{d}(a)^{L}\leq h_{D}^c(a).$
\end{proof}

\begin{theorem} \label{thm:equalityquotient} 
	If $D, \Omega \subseteq \mathbb{C}^n$ are  bounded, $d$-balanced, convex then 
	$$h_{D}^c(0)^L\leq S^d(0),$$
	where $L=\max_{1\leq i\leq n}d_i.$
\end{theorem}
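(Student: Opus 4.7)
The plan is to dualize the construction from Theorem \ref{thm:equalitypreq}. Fix $\epsilon>0$; by the definition of $h_D^c(0)$ I would choose $f\in\mathcal{O}_u(\Omega,D)$ and $r>0$ with $B_D^c(0,r)\subseteq f(\Omega)$ and $\tanh r>h_D^c(0)-\epsilon$. Since $0\in B_D^c(0,r)\subseteq f(\Omega)$, some $p\in\Omega$ satisfies $f(p)=0$, and the homogeneity of $\Omega$ lets me pre-compose with an automorphism of $\Omega$ sending $0\mapsto p$, so I may assume $f(0)=0$.

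Next, I would introduce the $d$-scaling $\psi(z):=\bigl((\tanh r)^{d_1}z_1,\ldots,(\tanh r)^{d_n}z_n\bigr)$, which by Remark \ref{rem:basicminko}(2) is a biholomorphism from $D=D^d(1)$ onto $D^d(\tanh r)$. The right-hand inequality of Result \ref{res:bharali} applied to $D$ gives $D^d(\tanh r)\subseteq B_D^c(0,r)\subseteq f(\Omega)$, so the composite $g:=f^{-1}\circ\psi\colon D\to\Omega$ is a well-defined injective holomorphic map with $g(0)=0$.

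The heart of the argument is to show $\Omega^d\bigl((\tanh r)^L\bigr)\subseteq g(D)$. For $w\in\Omega^d((\tanh r)^L)$, the distance-decreasing property of $f$ (using $f(0)=0$) together with the right-hand half of Result \ref{res:bharali} applied to $\Omega$ yields
\[
c_D(0,f(w))\leq c_\Omega(0,w)\leq \tanh^{-1}h_{d,\Omega}(w)<\tanh^{-1}(\tanh r)^L,
\]
and then the left-hand half of Result \ref{res:bharali} applied to $D$ gives $h_{d,D}(f(w))<\tanh r$. Hence $f(w)\in D^d(\tanh r)=\psi(D)$, so $w=g(\psi^{-1}(f(w)))\in g(D)$. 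This delivers $S^d(0)\geq(\tanh r)^L\geq(h_D^c(0)-\epsilon)^L$, and letting $\epsilon\to 0$ finishes the proof.

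The subtle point to track is the pairing of exponents in Result \ref{res:bharali}: a $\tanh^{-1}h_{d,\Omega}$ upper bound on $c_\Omega(0,\cdot)$ is combined with a $\tanh^{-1}h_{d,D}^L$ lower bound on $c_D(0,\cdot)$, so a single $L$-th power is lost in passing from an $\Omega^d$-sublevel set back to a $D^d$-sublevel set. This is precisely what forces the exponent $L$ in the conclusion, and what must be calibrated correctly when choosing the scaling factor $\tanh r$ used in $\psi$.
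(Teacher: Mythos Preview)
Your proof is correct and follows essentially the same route as the paper: the map $g=f^{-1}\circ\psi$ you build is exactly the paper's $g(w)=f^{-1}(\alpha^{d_1}w_1,\ldots,\alpha^{d_n}w_n)$ with $\alpha=\tanh r$, and the inclusion $\Omega^d(\alpha^L)\subseteq g(D)$ is proved via the same chain of inequalities from Result~\ref{res:bharali}. One point where you are in fact more careful than the paper is the normalization $f(0)=0$: the definition of $h_D^c$ does not require this, and you correctly invoke the homogeneity of $\Omega$ to arrange it, whereas the paper simply assumes it from the start.
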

\begin{proof}
	For $0\in \Omega$, let $f:\Omega\to D$ be an injective holomorphic map with $f(0)=0$. 
	Let $r>0$ be such that $B_D^c(0,r)\subseteq f(\Omega).$ Define $g:D\to \Omega$
	as $$g(w):=f^{-1}(\alpha^{d_1}w_1,\ldots, \alpha^{d_n}w_n ),$$ where $\alpha=\tanh
	r.$ Note that for $w\in D$, $h_{d,D}(w)<1$, which on using 
	Remark \ref{rem:basicminko}(2) gives us $h_{d,D}(\alpha^{d_1}w_1,\ldots, 
	\alpha^{d_n}w_n )<\alpha$
	and therefore $g$ is well defined. Also, $g$ is injective holomorphic with 
	$g(0)=0.$ 
	
	We next claim  that $\Omega^d(\alpha^L)\subseteq g(D).$ To see this, let 
	$z\in \Omega^d(\alpha^L)$, then
	\begin{align*}
		\alpha^L&>h_{d,\Omega}(z)\\
		&\geq \tanh c_{\Omega}(0,z)\\
		&=\tanh c_{f(\Omega)}(f(0),f(z))\\
		&\geq \tanh c_{D}(0,f(z))\\
		&\geq \left(h_{d,D}(f(z))\right)^L.
	\end{align*}
	This yields that $h_{d,D}(f(z))<\alpha,$ and thus we get our claim. This
	further yields $S^d(0)\geq \alpha^L= \left(\tanh r\right)^L,$ 
	which implies that $$h_{D}^c(0)^L\leq S^d(0).$$
\end{proof}
\begin{remark}\label{rem:relate}
	Observe that under the assumption of Theorem \ref{thm:equalityquotient}, using Theorem
	\ref{thm:equalitypreq}  we get
	$$S^d(0)^L \leq h_{D}^c(0)\leq S^d(0)^{1/L}.$$
	In case when $D, \Omega$ are
	bounded, balanced and convex this inequality gives \cite[~Theorem 3]{rong}.
\end{remark}

\begin{theorem}\label{thm:punctured}
	For a bounded, convex, $d$-balanced and homogeneous domain $\Omega$, let $D=\Omega\setminus \{0\}$.
	Then $$S^d(z)^L\leq h_{d,\Omega}(z)\leq  S^d(z)^{1/L}\ \mbox{for all} \ z\in D.$$
\end{theorem}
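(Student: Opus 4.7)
The plan is to prove the two inequalities separately. For the lower bound $S^d(z)^L \leq h_{d,\Omega}(z)$, first invoke Theorem \ref{thm:dextremal} to obtain, with $r := S^d(z)$, an injective holomorphic $f: D \to \Omega$ with $f(z) = 0$ and $\Omega^d(r) \subseteq f(D)$. Since $f$ is bounded and the singleton $\{0\}$ is a removable singularity (Riemann's theorem for $n = 1$, Hartogs for $n \geq 2$), $f$ extends to a holomorphic $\tilde f : \Omega \to \overline{\Omega}$; a supporting-hyperplane argument using convexity of $\Omega$ forces $\tilde f(0) \in \Omega$, so $\tilde f : \Omega \to \Omega$. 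The crucial intermediate claim is $\tilde f(0) \notin f(D)$: if instead $\tilde f(0) = f(w_0)$ for some $w_0 \in D$, then either $d\tilde f_0$ has full rank and $\tilde f$ is a local biholomorphism at $0$, producing a second preimage in $D$ for values near $\tilde f(0)$ and contradicting injectivity of $f$; or $d\tilde f_0$ is rank-deficient and the rank theorem traps a neighborhood of $0$ in an analytic subset of dimension $<n$, contradicting the openness of $f(U\setminus\{0\})$. Granted the claim, $h_{d,\Omega}(\tilde f(0)) \geq r$, and applying the decreasing property of $c_\Omega$ under $\tilde f: \Omega \to \Omega$ together with Result \ref{res:bharali} gives
\[
\tanh^{-1}\!\bigl(h_{d,\Omega}(\tilde f(0))^L\bigr) \leq c_\Omega(0,\tilde f(0)) = c_\Omega(\tilde f(z),\tilde f(0)) \leq c_\Omega(z,0) \leq \tanh^{-1}\!\bigl(h_{d,\Omega}(z)\bigr),
\]
so $S^d(z)^L \leq h_{d,\Omega}(\tilde f(0))^L \leq h_{d,\Omega}(z)$.

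For the upper bound $h_{d,\Omega}(z) \leq S^d(z)^{1/L}$, equivalently $h_{d,\Omega}(z)^L \leq S^d(z)$, the plan is to construct, for each $s \in (h_{d,\Omega}(z),1)$, an injective holomorphic $f_s: D \to \Omega$ with $f_s(z) = 0$ and $\Omega^d(s^L) \subseteq f_s(D)$, and then let $s \searrow h_{d,\Omega}(z)$ to obtain $S^d(z) \geq h_{d,\Omega}(z)^L$. The construction would be modeled on Bharali's proof of Result \ref{res:bharali}, starting from the $d$-analytic disc $\phi_s(\lambda) := (\lambda^{d_1} z_1 / s^{d_1}, \ldots, \lambda^{d_n} z_n / s^{d_n})$ which sends $\mathbb{D}$ into $\Omega$ with $\phi_s(0) = 0$ and $\phi_s(s) = z$, composing with a M\"obius transformation of $\mathbb{D}$ carrying $s$ to $0$, and promoting the result to a full-dimensional injective self-map of $\Omega$ using the $d$-balanced scalings $w \mapsto (\lambda^{d_1} w_1, \ldots, \lambda^{d_n} w_n)$ and Proposition \ref{prop:triangle}.

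The hard part will be this upper-bound construction. For a non-homogeneous $d$-balanced $\Omega$ with $d$ not constant, the $d$-Minkowski function fails to scale linearly under scalar multiplication, so naive M\"obius-type translations do not preserve $\Omega$; one must route the argument through Proposition \ref{prop:triangle} and the $d$-balanced scaling maps, delicately balancing the image-containment $\Omega^d(s^L) \subseteq f_s(D)$ against injectivity of $f_s$ on $D$ and the normalization $f_s(z)=0$.
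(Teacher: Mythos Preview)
Your argument for $S^d(z)^L \leq h_{d,\Omega}(z)$ is correct and self-contained: the extension $\tilde f$ through the removable singularity, the claim $\tilde f(0)\notin f(D)$ (hence $h_{d,\Omega}(\tilde f(0))\geq r$), and the chain via Result~\ref{res:bharali} all go through. Your rank case-split is in fact unnecessary: since $J_f$ is nowhere zero on $D$, the zero set of $J_{\tilde f}$ is contained in $\{0\}$, which for $n\geq 2$ contradicts the analytic-hypersurface structure of a nontrivial zero set, and for $n=1$ would force $f$ to be non-injective near $0$; so $d\tilde f_0$ always has full rank. This direct route differs from the paper's, which cites Theorem~\ref{thm:equalitypreq} and hence passes through the Fridman invariant $h_D^c$; your version avoids that detour.

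The second inequality, however, is not proved in your proposal\,---\,you sketch a plan and explicitly flag the obstruction without resolving it. The construction you propose (a one-dimensional $d$-disc composed with a M\"obius map, then ``promoted'' via $d$-scalings and Proposition~\ref{prop:triangle}) cannot yield an injective $f_s:D\to\Omega$ with $f_s(z)=0$: every $d$-scaling $w\mapsto(\lambda^{d_1}w_1,\ldots,\lambda^{d_n}w_n)$ fixes the origin, so none of the tools you list can move $z$ to $0$, and Proposition~\ref{prop:triangle} gives no translation mechanism either. The paper's route instead combines the proof of \cite[Theorem~4.5]{gen} with Theorems~\ref{thm:equalitypreq} and~\ref{thm:equalityquotient}, both of which assume $\Omega$ homogeneous (a hypothesis implicitly in force here though omitted from the statement of Theorem~\ref{thm:punctured}). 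With homogeneity the missing direction is immediate: take an automorphism $\psi$ of $\Omega$ with $\psi(z)=0$; then $\psi|_D:D\to\Omega$ is injective holomorphic with $\psi(D)=\Omega\setminus\{\psi(0)\}$, so $S^d(z)\geq h_{d,\Omega}(\psi(0))$, and $c_\Omega(0,\psi(0))=c_\Omega(z,0)$ together with Result~\ref{res:bharali} gives $h_{d,\Omega}(\psi(0))\geq h_{d,\Omega}(z)^L$. As written, your upper-bound argument has a genuine gap.
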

\begin{proof}
	It follows  directly from the proof of \cite[~Theorem 4.5 ]{gen} and Theorem \ref{thm:equalitypreq}.
\end{proof}	

\begin{remark}
	Note that when $\Omega$ is bounded, convex, balanced and homogeneous, then Theorem \ref{thm:punctured}
	reduces to the theorem of Rong and Yang \cite[~Theorem 4.5]{gen}, which states that  
	$$\mbox{for}\ z\in D=\Omega\setminus \{0\},\ S_D^{\Omega}(z)=h_{\Omega}(z).$$
\end{remark}

\section*{Acknowledgement}

We are thankful to Gautam Bharali, Peter Pflug and Kaushal Verma for reading our manuscript and 
suggesting many changes. We are also thankful to Fusheng Deng for sending the new arguments for
their proof \cite[Theorem~2.1]{2012}. We profusely thank the referee for comments and suggestions.

\medskip

\end{document}